\documentclass[11pt]{amsart}

\usepackage{amsmath,amssymb,amsfonts,amsthm,hyperref,eucal,verbatim,fullpage}

\newtheorem{theorem}{Theorem}[section]

\newtheorem{corollary}[theorem]{Corollary}
\newtheorem{example}[theorem]{Example}

\newtheorem{remark}[theorem]{Remark}

\DeclareMathOperator{\bbC}{\mathbb{C}}

\DeclareMathOperator{\bbR}{\mathbb{R}}

\DeclareMathOperator{\A}{\mathcal{A}}

\DeclareMathOperator{\C}{\mathcal{C}}
\DeclareMathOperator{\D}{\mathcal{D}}

\DeclareMathOperator{\id}{\operatorname{id}}

\DeclareMathOperator{\M}{\mathcal{M}}

\let\slasho=\o
\renewcommand{\o}{\overline}
\renewcommand{\P}{\mathcal{P}}

\DeclareMathOperator{\supp}{\operatorname{supp}}

\begin{document}

\title[Unique Expectations]{Unique Conditional Expectations\\ for Abelian $C^*$-Inclusions}
\author{Vrej Zarikian}
\address{U. S. Naval Academy, Annapolis, MD 21402}
\email{zarikian@usna.edu}
\subjclass[2010]{Primary: 46L05, 46L07 Secondary: 54C10, 60B05}
\keywords{Conditional expectation, regular averaging operator, exact Milutin map, pseudo-expectation}
\begin{abstract}
Let $\D \subseteq \A$ be an inclusion of unital abelian $C^*$-algebras. In this note we characterize (in topological terms) when there is a unique conditional expectation $E:\A \to \D$, at least when $\A$ is separable. We also provide the first example of an inclusion with a unique conditional expectation, but multiple pseudo-expectations (in the sense of Pitts).
\end{abstract}
\maketitle

\section{Introduction}

A \emph{$C^*$-inclusion} is a pair $(\C,\D)$, where $\C$ is a unital $C^*$-algebra and $\D \subseteq \C$ is a unital $C^*$-subalgebra (with the same unit). Ideally, the study of $C^*$-inclusions can lead to a better understanding of the containing algebra $\C$, via ``coordinatization'' with respect to the included algebra $\D$. Seminal works in this direction are Kumjian's paper on \emph{$C^*$-diagonals} \cite{Kumjian1986} and Renault's paper on \emph{Cartan subalgebras in $C^*$-algebras} \cite{Renault2008}. These, in turn, were motivated by Feldman and Moore's study of Cartan subalgebras in von Neumann algebras \cite{FeldmanMoore1977}. In particular, in both Kumjian and Renault's settings, there is a unique conditional expectation from $\C$ onto $\D$, which is faithful.\\

A \emph{conditional expectation} from $\C$ onto $\D$ is a contractive linear projection $E:\C \to \D$. Conditional expectations enjoy many nice properties---they are idempotent, unital completely positive (ucp) maps, and are $\D$-bimodular (i.e., $E(d_1xd_2) = d_1E(x)d_2$ for all $x \in \C$, $d_1, d_2 \in \D$) \cite{Tomiyama1957}. A conditional expectation is \emph{faithful} if $E(x^*x) = 0$ implies $x = 0$. Unfortunately, it can easily happen that a $C^*$-inclusion admits no conditional expectations. For example, if $\C$ is injective (in the category \textsf{OpSys} of operator systems and ucp maps), but $\D$ is not, then $(\C,\D)$ admits no conditional expectations. In particular, $(L^\infty[0,1],C[0,1])$ admits no conditional expectations.\\

In \cite{Pitts2012}, Pitts introduced \emph{pseudo-expectations} as a substitute for possibly non-existent conditional expectations. A pseudo-expectation for the $C^*$-inclusion $(\C,\D)$ is a ucp map $\theta:\C \to I(\D)$ such that $\theta|_{\D} = \id$. Here $I(\D)$ is Hamana's \emph{injective envelope} of $\D$, the (essentially) unique minimal injective in \textsf{OpSys} containing $\D$ \cite{Hamana1979}. In fact, $I(\D)$ is a unital $C^*$-algebra containing $\D$ as a unital $C^*$-subalgebra. By injectivity, pseudo-expectations always exist for any $C^*$-inclusion. Every conditional expectation is a pseudo-expectation. Like conditional expectations, pseudo-expectations are contractive and $\D$-bimodular. Unlike conditional expectations, pseudo-expectations are not idempotent. In fact, the composition $\theta \circ \theta$ is generally undefined, since $I(\D)$ rarely sits inside $\C$.\\

In \cite{PittsZarikian2016}, Pitts and the author made a systematic study of pseudo-expectations, with an emphasis on determining which $C^*$-inclusions admit a unique pseudo-expectation. For \emph{abelian $C^*$-inclusions} (i.e., $C^*$-inclusions $(\A,\D)$ with $\A$ abelian), we found an elegant necessary and sufficient condition for there to exist a unique pseudo-expectation. In order to state our result, we remind the reader that by the Gelfand-Naimark Theorem, $\A \cong C(Y)$ (the continuous complex-valued functions on a compact Hausdorff space $Y$), $\D \cong C(X)$, and the inclusion map $\iota:C(X) \to C(Y)$ is determined by a continuous surjection $j:Y \to X$ via the formula $\iota(f) = f \circ j$, $f \in C(X)$.

\begin{theorem}[\cite{PittsZarikian2016}, Cor. 3.21] \label{PsExp}
Let $j:Y \to X$ be a continuous surjection of compact Hausdorff spaces. Then the corresponding abelian $C^*$-inclusion $(C(Y),C(X))$ admits a unique pseudo-expectation if and only if there exists a unique minimal closed set $K \subseteq Y$ such that $j|_K:K \to X$ is a surjection.
\end{theorem}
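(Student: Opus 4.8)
The plan is to recast pseudo-expectations in measure-theoretic language, reduce their uniqueness to the uniqueness of a continuous lift of the Gleason cover, and finally match such lifts with the minimal closed surjective sets in the statement.

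First I would fix the ambient objects. By Hamana's theory, $I(C(X)) = C(\tilde{X})$ for an (essentially unique) extremally disconnected compact Hausdorff space $\tilde{X}$, and the inclusion $C(X) \hookrightarrow C(\tilde{X})$ is dual to an irreducible continuous surjection $\pi:\tilde{X} \to X$, the Gleason (projective) cover. A unital completely positive map $\theta:C(Y) \to C(\tilde{X})$ is the same thing as a weak-$*$ continuous family $\{\mu_{\tilde{x}}\}_{\tilde{x} \in \tilde{X}}$ of Borel probability measures on $Y$, via $\theta(h)(\tilde{x}) = \int_Y h\, d\mu_{\tilde{x}}$. The key opening observation is that $\theta$ is a pseudo-expectation exactly when $\theta(f \circ j) = f \circ \pi$ for all $f \in C(X)$, and since $\theta$ is $\D$-bimodular this is equivalent to $j_* \mu_{\tilde{x}} = \delta_{\pi(\tilde{x})}$, i.e. $\supp \mu_{\tilde{x}} \subseteq j^{-1}(\pi(\tilde{x}))$ for every $\tilde{x}$. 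Thus the set $\mathcal{PE}$ of pseudo-expectations is the (nonempty, convex, weak-$*$ compact) set of such fibered measure families.

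Next I would pass to extreme points. By Krein--Milman, $\mathcal{PE}$ is a singleton if and only if it has a unique extreme point, so it suffices to understand the extreme pseudo-expectations. I claim these are precisely the point-mass families $\mu_{\tilde{x}} = \delta_{\sigma(\tilde{x})}$ arising from continuous lifts $\sigma:\tilde{X} \to Y$ of $\pi$ through $j$ (that is, $j \circ \sigma = \pi$). One direction is immediate: a point-mass family is extreme because $\delta_{\sigma(\tilde{x})}$ is extreme in each fiber. I would then set up the bijection between such lifts and minimal closed surjective sets. Given a lift $\sigma$, the image $K_\sigma = \sigma(\tilde{X})$ is closed with $j(K_\sigma) = X$; since $\pi = (j|_{K_\sigma}) \circ \sigma$ is irreducible, $j|_{K_\sigma}$ is irreducible, hence $K_\sigma$ is minimal. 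Conversely, for a minimal $K$ the map $j|_K$ is irreducible, so composing the Gleason cover of $K$ with the canonical identification $\tilde{K} \cong \tilde{X}$ over $X$ produces a lift $\sigma_K:\tilde{X} \to K$ onto $K$; rigidity of the Gleason cover (triviality of over-base automorphisms, via density of singleton fibers for irreducible maps) shows this lift is unique for a given $K$. Hence $\sigma \mapsto \sigma(\tilde{X})$ is a bijection from continuous lifts onto minimal closed surjective sets. In particular, if there are two distinct minimal sets $K_1 \neq K_2$ they yield distinct lifts, hence distinct point-mass pseudo-expectations, which already gives the easy implication ``unique pseudo-expectation $\Rightarrow$ unique minimal $K$.''

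The main obstacle, and the real content of the converse, is the hard half of the extreme-point identification: \emph{every} extreme pseudo-expectation must have all $\mu_{\tilde{x}}$ equal to point masses. The strategy I would pursue is: if some $\mu_{\tilde{x}_0}$ is not a point mass, choose $h \in C(Y)$, $0 \le h \le 1$, separating its support, and attempt to split $\theta$ as a genuine convex combination of two pseudo-expectations built from $\theta(h\,\cdot\,)$ and $\theta((1-h)\,\cdot\,)$, normalized fiberwise. The delicate point — where the whole argument lives — is that the normalizing denominators $\theta(h)$ may vanish on part of the base, so one cannot naively divide; here I would exploit the extremal disconnectedness of $\tilde{X}$ (equivalently, the monotone completeness and injectivity of $C(\tilde{X})$) to perform the normalization and gluing across clopen pieces while preserving weak-$*$ continuity of the two halves. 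Granting this, the extreme pseudo-expectations are exactly the (unique, by rigidity) point-mass lifts onto minimal sets, so uniqueness of the minimal $K$ forces a unique extreme point and hence, by Krein--Milman, a unique pseudo-expectation. I expect the continuous disintegration step, together with the density-of-singleton-fibers fact underpinning rigidity, to be the only nonroutine parts.
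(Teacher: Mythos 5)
Your measure-theoretic reformulation, the bijection between $*$-homomorphic pseudo-expectations (equivalently point-mass families, equivalently continuous lifts $\sigma:\tilde{X}\to Y$ with $j\circ\sigma=\pi$) and the minimal closed sets $K$ with $j|_K$ onto, and hence the implication ``unique pseudo-expectation $\Rightarrow$ unique minimal $K$,'' are all sound. The fatal gap is the parenthetical claim that the set of pseudo-expectations is ``weak-$*$ compact,'' together with the ensuing Krein--Milman reduction of uniqueness to uniqueness of extreme points. There is no weak-$*$ topology available: $I(C(X))=C(\tilde{X})$ is a monotone complete AW$^*$-algebra but in general not a von Neumann algebra (for $X=[0,1]$ it is the Dixmier algebra of bounded Borel functions modulo meager sets), so by Sakai's theorem it is not a dual Banach space, and the set of ucp maps $C(Y)\to C(\tilde{X})$ restricting to the identity on $C(X)$ is not compact in \emph{any} locally convex Hausdorff topology (compactness of such a bounded convex set would force $I(C(X))$ to be a dual space, by the Dixmier--Ng theorem). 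If instead you intend the topology of pointwise weak-$*$ convergence of the families $\tilde{x}\mapsto\mu_{\tilde{x}}$ inside the Tychonoff-compact product $\prod_{\tilde{x}}\P(Y)$, then the product is compact but your set is not closed in it, since pointwise limits of weak-$*$ continuous families need not be continuous. Concretely, for $Y=X\times\{0,1\}$ and $j$ the projection, the pseudo-expectations form a set affinely isomorphic to the order interval $\{p\in I(C(X)):0\le p\le 1\}$, which is noncompact whenever $X$ is infinite. Without compactness, ``unique extreme point $\Rightarrow$ singleton'' is precisely the inference that fails; the paper itself flags this in Remark \ref{KM}, and Examples \ref{canonical} and \ref{space-filling} exhibit the failure for the parallel convex set of conditional expectations (one extremal point, or none, yet infinitely many elements). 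So your converse direction is unproven.

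Ironically, the step you isolate as ``the real content''---that extreme pseudo-expectations are point-mass families---is the easy part, and needs none of the clopen normalization you describe: given $h\in C(Y)$ with $0\le h\le 1$, set $S(g)=\theta(hg)-\theta(h)\theta(g)$; then $\theta\pm S$ are again pseudo-expectations (positivity uses only that domain and codomain are abelian, and $S$ annihilates $C(X)\circ j$ because $C(X)\circ j$ lies in the multiplicative domain of $\theta$), so extremality forces $S=0$ for every $h$, i.e.\ $\theta$ is multiplicative. What the theorem actually requires, and what the cited proof in \cite{PittsZarikian2016} supplies, is a mechanism controlling \emph{all} pseudo-expectations, not just extreme ones. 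The key points there are: minimality of $K$ is equivalent to irreducibility of $j|_K$, i.e.\ essentiality of the extension $C(X)\subseteq C(K)$, and Hamana's rigidity then shows any pseudo-expectation supported in $K$ is the canonical embedding $C(K)\hookrightarrow I(C(X))$; while a pseudo-expectation whose support meets $Y\setminus K$ can be used, via irreducibility of $\pi$ and patching over clopen subsets of the extremally disconnected space $\tilde{X}$, to manufacture a second minimal closed set mapping onto $X$, contradicting uniqueness. Replacing your Krein--Milman step by an argument of this kind is not a finishing touch; it is the heart of the proof.
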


The main purpose of this note is to find an analogous topological characterization of when $(C(Y),C(X))$ admits a unique conditional expectation. We are able to do so under the restriction that $Y$ is a compact metric space (equivalently, that $C(Y)$ is separable).

\begin{theorem} \label{main}
Let $j:Y \to X$ be a continuous surjection of compact metric spaces. Then the corresponding abelian $C^*$-inclusion $(C(Y),C(X))$ admits a unique conditional expectation if and only if there exists a unique $G_\delta$ set $A \subseteq Y$ such that $j|_A:A \to X$ is an open surjection. In that case, $A \subseteq Y$ is closed and $j|_A:A \to X$ is a homeomorphism.
\end{theorem}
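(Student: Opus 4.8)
The plan is to first translate the analytic problem into a measure-selection problem. By Tomiyama's theorem \cite{Tomiyama1957}, a conditional expectation $E:C(Y)\to C(X)$ is automatically a $C(X)$-bimodular ucp idempotent, so by the Riesz representation theorem it is encoded by a weak\*-continuous family $x\mapsto\mu_x$ of Borel probability measures on $Y$ with each $\mu_x$ supported on the fiber $j^{-1}(x)$, via $E(f)(x)=\int_Y f\,d\mu_x$. (The support condition is forced by $E(g\circ j)=g$, which says the pushforward $j_*\mu_x=\delta_x$.) Thus conditional expectations correspond bijectively to weak\*-continuous selections of the fiberwise probability-measure bundle, and uniqueness of $E$ becomes uniqueness of such a selection.

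I would then record two engines. First, from any conditional expectation $E$ I attach the support multifunction $x\mapsto\supp\mu_x$ and the set $A_E:=\{y:y\in\supp\mu_{j(y)}\}=\bigcup_x\supp\mu_x$. Weak\* continuity makes $x\mapsto\mu_x(U)$ lower semicontinuous for each open $U$, and this single fact shows simultaneously that the support multifunction is lower semicontinuous with compact values; that $j|_{A_E}$ is an open surjection (if $y\in A_E\cap U$ then $\mu_{j(y)}(U)>0$, so $\mu_x(U)>0$ for $x$ near $j(y)$, placing a point of $A_E\cap U$ over each such $x$); and that $A_E$ is $G_\delta$ (its complement equals $\bigcup_n\{y\in U_n:\mu_{j(y)}(U_n)=0\}$ for a countable base $\{U_n\}$, a countable union of locally closed sets). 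I also note the identity $A_E\cap j^{-1}(x)=\supp\mu_x$. Second, the measure-selection theorem: for a lower semicontinuous multifunction $x\mapsto C(x)$ with nonempty compact values in $Y$, the assignment $x\mapsto\{\nu:\nu\text{ a probability measure with }\supp\nu\subseteq C(x)\}$ has nonempty weak\*-compact convex values and is lower semicontinuous, so by Michael's selection theorem (applied after realizing the metrizable compact convex set of probability measures on $Y$ as a compact convex subset of a Banach space) it admits continuous selections, and any value $\nu_0$ prescribed at a single point extends to a global continuous selection; when $C(x)\subseteq j^{-1}(x)$, each such selection is a conditional expectation.

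For the forward implication, assume $E$ is the unique conditional expectation. If some $\supp\mu_{x_0}$ contained distinct points $y_1,y_2$, then selecting over the (closed-valued) support multifunction with prescribed value $\delta_{y_1}$ at $x_0$ would produce $E'$ with $\mu'_{x_0}=\delta_{y_1}\neq\mu_{x_0}$, contradicting uniqueness. Hence $\mu_x=\delta_{s(x)}$ for a necessarily continuous section $s:X\to Y$, so $A_E=s(X)$ is closed and $j|_{A_E}$ is a continuous bijection from a compact space onto $X$, i.e.\ a homeomorphism (in particular an open surjection, and $G_\delta$). For uniqueness of $A$, let $A'$ be any $G_\delta$ set with $j|_{A'}$ an open surjection; for $y_0\in A'$ with $x_0=j(y_0)$, selecting over $x\mapsto\overline{A'\cap j^{-1}(x)}$ with prescribed value $\delta_{y_0}$ yields a conditional expectation equal to $E$ by uniqueness, so $\delta_{y_0}=\delta_{s(x_0)}$ and $y_0=s(x_0)$; thus $A'\subseteq s(X)$, and surjectivity of $j|_{A'}$ forces $A'=s(X)=A_E$.

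For the reverse implication, assume there is a unique $G_\delta$ set $A$ with $j|_A$ an open surjection. Selecting over $x\mapsto\overline{A\cap j^{-1}(x)}$ produces at least one conditional expectation, and the first engine shows every conditional expectation $E$ has $A_E$ a $G_\delta$ set on which $j$ restricts to an open surjection, so $A_E=A$ by uniqueness and hence $\supp\mu^E_x=A\cap j^{-1}(x)$. If $A\cap j^{-1}(x_0)$ had distinct points $y_1,y_2$, I would select a conditional expectation $E^1$ with $\mu^1_{x_0}=\delta_{y_1}$; then $A=A_{E^1}$ would force $A\cap j^{-1}(x_0)=\supp\mu^1_{x_0}=\{y_1\}$, contradicting $y_2\in A\cap j^{-1}(x_0)$. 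Hence every fiber $A\cap j^{-1}(x)$ is a singleton $\{s(x)\}$, so $j|_A$ is a continuous open bijection, hence a homeomorphism with $A$ closed, and every conditional expectation satisfies $\mu^E_x=\delta_{s(x)}$, i.e.\ $E$ is unique. The main obstacle, which I would develop most carefully, is the second engine together with the lower-semicontinuity and $G_\delta$ bookkeeping in the first: checking that the probability-measure multifunction is lower semicontinuous in the weak\* topology and that Michael's theorem with single-point extension applies to this merely metrizable (non-Banach) compact convex target, and confirming that $A_E$ is genuinely $G_\delta$ with $j|_{A_E}$ open. Everything else reduces to manipulating the identity $A_E\cap j^{-1}(x)=\supp\mu_x$.
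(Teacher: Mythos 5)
Your proposal is correct, but it is not the paper's proof: the decisive technical input is different. The paper, after the same Birkhoff-type translation into weak\*-continuous families $x \mapsto \mu_x$, leans on three cited results: Lloyd's characterization of extremal expectations as sections (to get $\mu_x = \delta_{\alpha(x)}$ in the forward direction), the exact Milutin mapping theorem of Repov\v{s}--Semenov--\v{S}\v{c}epin together with Ageev--Tymchatyn (to produce, for any $G_\delta$ set $A$ with $j|_A$ an open surjection, a conditional expectation whose supports meet $A$ in the \emph{entire} fiber $(j|_A)^{-1}(x)$), and a second Ageev--Tymchatyn theorem (to see that the union of supports $A_E$ is $G_\delta$). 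You replace all of this by Michael's convex-valued selection theorem with a value prescribed at one point: your selections are only supported \emph{inside} $\overline{A \cap j^{-1}(x)}$, a much weaker conclusion than the exact (full-fiber-support) Milutin theorem, but you arrange the uniqueness arguments pointwise---prescribing $\delta_{y_0}$ over $j(y_0)$ for each individual $y_0$---so that this weaker engine suffices; and you prove the $G_\delta$ property of $A_E$ directly, observing that its complement is a countable union of locally closed sets, hence $F_\sigma$ in a metric space, rather than citing Ageev--Tymchatyn. What the paper's route buys is brevity, with the heavy lifting outsourced to recent, rather specialized theorems; what your route buys is self-containedness and classicality (Michael's 1956 theorem plus elementary measure theory), and it bypasses Lloyd's extremal analysis entirely. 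The two points you flag as needing care are real but unproblematic: lower semicontinuity of $x \mapsto \{\nu \in \P(Y) : \supp(\nu) \subseteq C(x)\}$ follows by approximating a given measure by finitely supported ones with atoms in $C(x_0)$ and moving the atoms using lower semicontinuity of $C$; and Michael's theorem applies after affinely embedding $\P(Y)$ (weak\*, metrizable since $C(Y)$ is separable) into $\ell^2$ via $\nu \mapsto \left(2^{-n}\int f_n \, d\nu\right)_n$ for a dense sequence $(f_n)$ in the unit ball of $C(Y)$, the values being compact convex and hence closed there.
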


Even though this result appears classical, it is new, to the author's knowledge. Certainly the proof is not classical, since it relies on fairly recent results about \emph{regular averaging operators} and \emph{exact Milutin maps} \cite{RepovsSemenovScepin1997, AgeevTymchatyn2005}.\\

We also produce an abelian $C^*$-inclusion with a unique conditional expectation but multiple pseudo-expectations (Example \ref{Q3}). This answers Question 3 in Section 7.1 of \cite{PittsZarikian2016} affirmatively.\\

\emph{Acknowledgements:} The author would like to thank Ed Tymchatyn for his help in understanding \cite{AgeevTymchatyn2005}. Also, the author is indebted to the anonymous referee for valuable suggestions that improved the clarity of this paper.

\section{Preliminaries}

As mentioned in the introduction, the proof of Theorem \ref{main} consists mainly of appealing to known results from the theory of regular averaging operators and exact Milutin maps. We collect the relevant results here for the reader's convenience, restating them in the form best suited to our purposes. We only include proofs when our restatement appears stronger than the original.\\

For a compact Hausdorff space $Y$ (or for a Polish space $Y$), $C_b(Y)$ will denote the absolutely-bounded continuous complex-valued functions on $Y$ equipped with the supremum norm, $\M(Y)_+ \subseteq C_b(Y)^*$ will denote the finite non-negative regular Borel measures on $Y$, equipped with the weak* topology, and $\P(Y) \subseteq \M(Y)_+$ will denote the regular Borel probability measures on $Y$. For $\mu \in \M(Y)_+$, we define
\[
    \supp(\mu) = \{y \in Y: \text{$\mu(U) > 0$ for every open set $U \subseteq Y$ containing $y$}\}.
\]
It is easy to see that $\supp(\mu) \subseteq Y$ is closed. For $\mu \in \P(Y)$, we have that
\[
    \supp(\mu) = \bigcap\{F: \text{$F \subseteq Y$ is a closed set such that $\mu(F) = 1$}\}.
\]

\begin{theorem}[\cite{Birkhoff1950}] \label{averaging}
Let $j:Y \to X$ be a continuous surjection of compact Hausdorff spaces, with corresponding abelian $C^*$-inclusion $(C(Y),C(X))$.
\begin{enumerate}
\item There exists a bijective correspondence between the conditional expectations $E:C(Y) \to C(X)$ and the continuous maps $\mu:X \to \P(Y)$ such that $\supp(\mu_x) \subseteq j^{-1}(x)$, $x \in X$. Namely, for $g \in C(Y)$ and $x \in X$, $E(g)(x) = \int_Y g(y)d\mu_x(y)$.
\item More generally, there exists a bijective correspondence between the completely positive $C(X)$-bimodule maps $\theta:C(Y) \to C(X)$ and the continuous maps $\nu:X \to \M(Y)_+$ such that $\supp(\nu_x) \subseteq j^{-1}(x)$, $x \in X$.
\end{enumerate}
\end{theorem}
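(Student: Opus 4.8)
The plan is to pass between maps $C(Y)\to C(X)$ and measure-valued functions on $X$ using Gelfand duality together with the Riesz representation theorem, treating part (2) as the basic statement and part (1) as its unital specialization. Throughout, I would identify $C(X)$ with its image $\{f\circ j:f\in C(X)\}\subseteq C(Y)$, so that $C(X)$-bimodularity of $\theta$ reads $\theta((f\circ j)g)=f\,\theta(g)$ for $f\in C(X)$ and $g\in C(Y)$ (left and right actions coincide since $C(X)$ is abelian).

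For the direction from maps to measures, fix a completely positive $C(X)$-bimodule map $\theta:C(Y)\to C(X)$. For each $x\in X$ define $\nu_x:C(Y)\to\bbC$ by $\nu_x(g)=\theta(g)(x)$; as the composition of the positive map $\theta$ with evaluation at $x$, it is a positive linear functional, so the Riesz representation theorem furnishes a unique $\nu_x\in\M(Y)_+$ with $\theta(g)(x)=\int_Y g\,d\nu_x$. Weak*-continuity of $x\mapsto\nu_x$ is immediate: $x\mapsto\int_Y g\,d\nu_x=\theta(g)(x)$ is continuous for every $g\in C(Y)$ precisely because $\theta(g)\in C(X)$. For the support condition I would apply bimodularity with $g=1$ to obtain $\int_Y(f\circ j)\,d\nu_x=f(x)\,\nu_x(Y)$ for all $f\in C(X)$; this says the pushforward $j_*\nu_x$ equals $\nu_x(Y)\,\delta_x$, so $\nu_x$ is concentrated on $j^{-1}(x)$ and hence $\supp(\nu_x)\subseteq j^{-1}(x)$, the latter set being closed. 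In the unital case $\theta(1)=1$ forces $\nu_x(Y)=1$, i.e. $\nu_x\in\P(Y)$.

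For the reverse direction, given a weak*-continuous $\nu:X\to\M(Y)_+$ with $\supp(\nu_x)\subseteq j^{-1}(x)$, I would set $\theta(g)(x)=\int_Y g\,d\nu_x$. Weak*-continuity guarantees $\theta(g)\in C(X)$, and positivity and complete positivity are inherited from the positivity of each $\nu_x$ (a positive map into a commutative $C^*$-algebra is automatically completely positive). Bimodularity follows by computing $\int_Y(f\circ j)g\,d\nu_x$ and using that $f\circ j\equiv f(x)$ on $\supp(\nu_x)\subseteq j^{-1}(x)$. The two constructions are mutually inverse by the uniqueness clause in the Riesz representation theorem. In the setting of part (1), the normalization $\nu_x\in\P(Y)$ yields $\theta(1)=1$, and the same support computation shows $\theta(f\circ j)=f$, so $\theta$ is a unital, positive, hence contractive, idempotent projection onto $C(X)$, i.e. a conditional expectation (alternatively, one may invoke Tomiyama's theorem once $\theta$ is seen to be a contractive projection).

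The main obstacle is the faithful translation between the algebraic condition of $C(X)$-bimodularity and the geometric support condition $\supp(\nu_x)\subseteq j^{-1}(x)$; the pushforward identity $j_*\nu_x=\nu_x(Y)\,\delta_x$ is the crux, and I would double-check the regularity hypotheses so that both the Riesz correspondence and the identity $\supp(j_*\nu)=\overline{j(\supp\nu)}$ remain valid for the (possibly non-metrizable) compact Hausdorff spaces at hand. A minor point to verify carefully is that positivity of $\theta$ into the commutative algebra $C(X)$ upgrades automatically to complete positivity, which is precisely what lets part (2) be phrased for completely positive maps with no extra work.
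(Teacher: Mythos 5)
Your proposal is correct and follows essentially the same route as the paper: pointwise Riesz representation plus weak*-continuity, with the bimodularity identity $\int_Y (f\circ j)\,d\nu_x = f(x)\nu_x(Y)$ (i.e.\ bimodularity applied with $g=1$) translating into the support condition, and the converse checked by restricting integrals to $\supp(\nu_x)\subseteq j^{-1}(x)$. The one point you flag---regularity issues behind the pushforward identity in non-metrizable spaces---is exactly what the paper's proof handles directly, by a Urysohn function and the estimate $\nu_x(U)\leq 2\int_Y f(j(y))\,d\nu_x(y)=0$ on an open neighborhood $U$ of a point outside the fiber, so your concern is resolved by replacing the appeal to $j_*\nu_x=\nu_x(Y)\,\delta_x$ with that elementary estimate (or by inner regularity of $\nu_x$).
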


\begin{proof}
We only prove (2). Let $\theta:C(Y) \to C(X)$ be a completely positive $C(X)$-bimodule map. For each $x \in X$, the map $C(Y) \to \bbC: g \mapsto \theta(g)(x)$ is a positive linear functional, and so there exists a unique $\nu_x \in \M(Y)_+$ such that $\theta(g)(x) = \int_Y g(y)d\nu_x(y)$, $g \in C(Y)$. Clearly $\nu:X \to \M(Y)_+$ is continuous.
Since $\theta$ is $C(X)$-bimodular, $\theta(f \circ j) = f\theta(1)$, $f \in C(X)$. Equivalently, $\int_Y f(j(y))d\nu_x(y) = f(x)\nu_x(Y)$, $f \in C(X)$, $x \in X$. Fix $x \in X$. If $y_0 \notin j^{-1}(x)$, then there exists $f \in C(X,[0,1])$ such that $f(j(y_0)) = 1$ and $f(x) = 0$. Let $U = \{y \in Y: f(j(y)) > 1/2\}$, an open set containing $y_0$. Then
\[
    \nu_x(U) = \int_Y \chi_U(y)d\nu_x(y) \leq 2\int_Y f(j(y))d\nu_x(y) = 2f(x)\nu_x(Y) = 0.
\]
Thus $y_0 \notin \supp(\nu_x)$. It follows that $\supp(\nu_x) \subseteq j^{-1}(x)$.

Conversely, let $\nu:X \to \M(Y)_+$ be a continuous map such that $\supp(\nu_x) \subseteq j^{-1}(x)$, $x \in X$. Clearly $\theta:C(Y) \to C(X)$ defined by
\[
    \theta(g)(x) = \int_Y g(y)d\nu_x(y), ~ g \in C(Y), ~ x \in X,
\]
is a completely positive map. If $g \in C(Y)$ and $f \in C(X)$, then for all $x \in X$,
\begin{eqnarray*}
    \theta((f \circ j)g)(x)
    &=& \int_Y f(j(y))g(y)d\nu_x(y) = \int_{j^{-1}(x)} f(j(y))g(y)d\nu_x(y)\\
    &=& \int_{j^{-1}(x)} f(x)g(y)d\nu_x(y) = f(x)\int_{j^{-1}(x)} g(y)d\nu_x(y)\\
    &=& f(x)\int_Y g(y)d\nu_x(y) = f(x)\theta(g)(x),
\end{eqnarray*}
which shows that $\theta$ is $C(X)$-bimodular.
\end{proof}

\begin{remark}
Theorem \ref{averaging} already gives a topological characterization of when there exists a unique conditional expectation $E:C(Y) \to C(X)$. Namely, there exists a unique conditional expectation $E:C(Y) \to C(X)$ if and only if there exists a unique continuous map $\mu:X \to \P(Y)$ such that $\supp(\mu_x) \subseteq j^{-1}(x)$, $x \in X$. In many concrete situations, this is a very practical characterization, easy to apply. Nonetheless, we seek a characterization, Theorem \ref{main}, which is more topological, referring only to $j:Y \to X$, and not requiring knowledge of $\P(Y)$ nor the construction of auxiliary maps.
\end{remark}

\begin{corollary}[\cite{Lloyd1963}] \label{extremal char}
Let $j:Y \to X$ be a continuous surjection of compact Hausdorff spaces, with corresponding abelian $C^*$-inclusion $(C(Y),C(X))$. Let $E:C(Y) \to C(X)$ be a conditional expectation and $\mu:X \to \P(Y)$ be the corresponding continuous map such that $\supp(\mu_x) \subseteq j^{-1}(x)$, $x \in X$. Then the following are equivalent:
\begin{enumerate}
\item $E$ is extremal in the set of all conditional expectations $C(Y) \to C(X)$.
\item $E$ is extremal in the set of all ucp maps $C(Y) \to C(X)$.
\item $E$ is a $*$-homomorphism.
\item There exists a continuous map $\alpha:X \to Y$ such that $j \circ \alpha = \id$ and $E(g) = g \circ \alpha$, $g \in C(Y)$.
\item There exists a continuous map $\alpha:X \to Y$ such that $j \circ \alpha = \id$ and $\mu_x = \delta_{\alpha(x)}$, $x \in X$.
\item $\supp(\mu_x)$ is a singleton for all $x \in X$.
\end{enumerate}
\end{corollary}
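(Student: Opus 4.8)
The plan is to prove the cycle of implications $(1) \Rightarrow (6) \Rightarrow (5) \Rightarrow (4) \Rightarrow (3) \Rightarrow (2) \Rightarrow (1)$, in which every arrow except the first is routine and the implication $(1) \Rightarrow (6)$ carries all the analytic content. Throughout I would work on the measure side, exploiting the correspondence of Theorem \ref{averaging}(1).

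First I would dispatch the easy arrows. For $(6) \Rightarrow (5)$: a probability measure whose support is a single point is the corresponding Dirac measure, so $\mu_x = \delta_{\alpha(x)}$ for a well-defined function $\alpha:X \to Y$; the containment $\supp(\mu_x) \subseteq j^{-1}(x)$ gives $j \circ \alpha = \id$, and $\alpha$ is continuous because $y \mapsto \delta_y$ is a homeomorphism of $Y$ onto the Dirac measures in $\P(Y)$ (with the weak* topology), together with the assumed continuity of $x \mapsto \mu_x$. For $(5) \Rightarrow (4)$: $E(g)(x) = \int_Y g \, d\delta_{\alpha(x)} = g(\alpha(x))$. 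For $(4) \Rightarrow (3)$: composition with the continuous map $\alpha$ is a unital $*$-homomorphism. For $(3) \Rightarrow (2)$: representing a general ucp map $\theta:C(Y) \to C(X)$ by its weak*-continuous family of states $\theta(g)(x) = \int_Y g \, d\nu_x$, any decomposition $E = \tfrac12(\theta_1 + \theta_2)$ yields, for each fixed $x$, $\delta_{\alpha(x)} = \tfrac12\big(\nu_x^{(1)} + \nu_x^{(2)}\big)$; since Dirac measures are extreme points of $\P(Y)$, this forces $\nu_x^{(1)} = \nu_x^{(2)} = \delta_{\alpha(x)}$, hence $\theta_1 = \theta_2 = E$. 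Finally $(2) \Rightarrow (1)$ is immediate, since the conditional expectations form a convex subset of the ucp maps and $E$ lies in it.

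The main obstacle is $(1) \Rightarrow (6)$, which must upgrade extremality in the smaller set of conditional expectations to a pointwise statement about supports. I would argue by contraposition: suppose $\supp(\mu_{x_0})$ contains two distinct points for some $x_0 \in X$. Then I can choose a real-valued $g \in C(Y)$ with $\|g\|_\infty \le 1$ that is non-constant on $\supp(\mu_{x_0})$, and define two competing families of measures by
\[
    d\mu_x^{\pm} = \left(1 \pm \tfrac12\big(g - E(g)(x)\big)\right) d\mu_x, \qquad x \in X.
\]
Each $\mu_x^{\pm}$ is again a probability measure: the integrand is nonnegative because $|g - E(g)(x)| \le 2$, and it integrates to $1$ because $\int_Y \big(g - E(g)(x)\big)\,d\mu_x = E(g)(x) - E(g)(x) = 0$. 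Since $\mu_x^{\pm} \ll \mu_x$, we keep $\supp(\mu_x^{\pm}) \subseteq \supp(\mu_x) \subseteq j^{-1}(x)$, and $x \mapsto \mu_x^{\pm}$ is weak* continuous; thus by Theorem \ref{averaging}(1) these families define conditional expectations $E^{\pm}:C(Y) \to C(X)$ with $E = \tfrac12(E^+ + E^-)$. They are distinct because $\mu_{x_0}^+ - \mu_{x_0}^- = \big(g - E(g)(x_0)\big)\,d\mu_{x_0}$ is nonzero, $g$ being non-constant on $\supp(\mu_{x_0})$. This contradicts the extremality of $E$ among conditional expectations, closing the cycle.

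The only subtleties I anticipate are the bookkeeping for continuity and positivity of the perturbed families $\mu^{\pm}$, and verifying that non-constancy of $g$ on $\supp(\mu_{x_0})$ genuinely separates $\mu_{x_0}^+$ from $\mu_{x_0}^-$; both reduce to the elementary estimates above once $g$ is chosen. I would also note in passing that the chain $(5) \Leftrightarrow (6)$ shows the resulting $\alpha$ is automatically a continuous section of $j$, which is the form of the conclusion most useful later.
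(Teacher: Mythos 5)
Your proof is correct, but it traverses the cycle in the opposite direction from the paper and places the analytic weight in a different spot. The paper proves $(1) \Rightarrow (2) \Rightarrow (3) \Rightarrow (4) \Rightarrow (5) \Rightarrow (6) \Rightarrow (1)$, where the two substantive steps are handled by citation: $(1) \Rightarrow (2)$ follows because the conditional expectations form a face in the set of ucp maps (\cite{PittsZarikian2016}, Prop.\ 2.4), and $(2) \Rightarrow (3)$ is St\slasho rmer's theorem that extreme ucp maps into an abelian $C^*$-algebra are $*$-homomorphisms (\cite{Stormer2013}, Cor.\ 3.1.6); the closing step $(6) \Rightarrow (1)$ is the easy one there, via extremality of Dirac measures in $\P(Y)$. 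You instead make $(1) \Rightarrow (6)$ the hard step and prove it by hand: the perturbed densities $1 \pm \tfrac12\bigl(g - E(g)(x)\bigr)$ yield two distinct conditional expectations averaging to $E$ whenever some $\supp(\mu_{x_0})$ contains two points---essentially Lloyd's original argument for extreme averaging operators---and you replace St\slasho rmer's theorem by the elementary observation that a $*$-homomorphism is represented by Dirac measures, which are extreme in $\P(Y)$. What each route buys: yours is self-contained and elementary, needing neither the face lemma nor St\slasho rmer's theorem (both nontrivial results), while the paper's is shorter on the page and makes explicit the structural fact that extremality among conditional expectations upgrades automatically to extremality among all ucp maps, a fact your argument recovers only indirectly by going around the cycle. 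One small point of care: in your $(3) \Rightarrow (2)$ you write $\delta_{\alpha(x)}$ for the representing measures of $E$, but at that stage of the cycle you may not assume $(4)$ or $(5)$; you should state the one-line justification that since $E$ is a unital $*$-homomorphism, each state $g \mapsto E(g)(x)$ is multiplicative, hence a point evaluation, hence given by a Dirac measure. With that sentence added, every step checks out: the positivity bound $|g - E(g)(x)| \le 2$, the weak$^*$ continuity of $x \mapsto \mu^{\pm}_x$ (for each $h$, the integral is $E(h) \pm \tfrac12\bigl(E(gh) - E(g)E(h)\bigr) \in C(X)$), and the separation of $E^+$ from $E^-$ at $x_0$ via non-constancy of $g$ on $\supp(\mu_{x_0})$ are all as you claim.
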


\begin{proof}
(1 $\implies$ 2) The set of all conditional expectations $C(Y) \to C(X)$ is a face in the set of all ucp maps $C(Y) \to C(X)$. (For example, see \cite[Prop. 2.4]{PittsZarikian2016}.)

(2 $\implies$ 3) See \cite[Cor. 3.1.6]{Stormer2013}.

(3 $\implies$ 4) Since $E:C(Y) \to C(X)$ is a $*$-homomorphism, there exists a continuous map $\alpha:X \to Y$ such that $E(g) = g \circ \alpha$, $g \in C(Y)$. Since $E:C(Y) \to C(X)$ is a conditional expectation, $f = E(f \circ j) = f \circ j \circ \alpha$, $f \in C(X)$. Thus $j \circ \alpha = \id$.

(4 $\implies$ 5) For all $g \in C(Y)$ and all $x \in X$,
\[
    \int_Y g(y)d\mu_x(y) = E(g)(x) = g(\alpha(x)) = \int_Y g(y)d\delta_{\alpha(x)}(y).
\]

(5 $\implies$ 6) Trivial.

(6 $\implies$ 1) Theorem \ref{averaging} and the fact that for any $y \in Y$, $\delta_y$ is extremal in $\P(Y)$.
\end{proof}

\begin{corollary}[\cite{Lloyd1963}] \label{extremal}
Let $j:Y \to X$ be a continuous surjection of compact Hausdorff spaces, with corresponding abelian $C^*$-inclusion $(C(Y),C(X))$. There exist bijective correspondences between the following sets:
\begin{enumerate}
\item The extremal conditional expectations $E:C(Y) \to C(X)$.
\item The continuous maps $\alpha:X \to Y$ such that $j \circ \alpha = \id$ (i.e., the continuous sections of $j$).
\item The continuous maps $\mu:X \to \P(Y)$ such that $\supp(\mu_x)$ is a singleton contained in $j^{-1}(x)$ for all $x \in X$.
\item The closed sets $K \subseteq Y$ such that $j|_K:K \to X$ is a homeomorphism.
\end{enumerate}
\end{corollary}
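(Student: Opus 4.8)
The plan is to leverage the heavy lifting already done in Corollary \ref{extremal char}, so that most of the four-way correspondence becomes essentially formal, with the only genuinely new content being the passage to the closed sets in item (4).

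First I would observe that Theorem \ref{averaging}(1) already furnishes a bijection between \emph{all} conditional expectations $E:C(Y)\to C(X)$ and \emph{all} continuous maps $\mu:X\to\P(Y)$ with $\supp(\mu_x)\subseteq j^{-1}(x)$. Restricting to the extremal expectations and invoking the equivalence (1)$\iff$(6) of Corollary \ref{extremal char}, this bijection carries the set in (1) exactly onto the set in (3). Likewise, the equivalences (1)$\iff$(4)$\iff$(5) of Corollary \ref{extremal char} show that an extremal expectation is precisely one of the form $E(g)=g\circ\alpha$ for a continuous section $\alpha$ of $j$, and that distinct sections yield distinct expectations (they produce distinct maps $\mu_x=\delta_{\alpha(x)}$). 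Thus (1), (2), and (3) are already in bijective correspondence, and it remains only to match (2) with (4).

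For the correspondence between (2) and (4), I would argue as follows. Given a continuous section $\alpha:X\to Y$ (so $j\circ\alpha=\id$), set $K:=\alpha(X)$. Since $X$ is compact and $\alpha$ is continuous, $K$ is a compact, hence closed, subset of the Hausdorff space $Y$. The relation $j\circ\alpha=\id$ forces $\alpha$ to be injective and exhibits $j|_K:K\to X$ as a continuous bijection with inverse $\alpha$; being a continuous bijection from a compact space onto a Hausdorff space, $j|_K$ is a homeomorphism. Conversely, given a closed set $K\subseteq Y$ for which $j|_K:K\to X$ is a homeomorphism, define $\alpha:=(j|_K)^{-1}:X\to Y$, which is continuous and satisfies $j\circ\alpha=\id$. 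These assignments are manifestly mutually inverse: $(j|_K)^{-1}(X)=K$ since $j|_K$ is onto $X$, and $j|_{\alpha(X)}=\alpha^{-1}$ since $j|_K=\alpha^{-1}$ whenever $K=\alpha(X)$.

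I do not anticipate a serious obstacle, as the argument is a routine application of the compact-to-Hausdorff homeomorphism principle layered on top of Corollary \ref{extremal char}. The one point requiring a little care is verifying that $K=\alpha(X)$ really is closed and that $j|_K$ is a genuine homeomorphism rather than merely a continuous bijection; both follow from compactness of $X$ (hence of $\alpha(X)$) together with the Hausdorff property, which are in force by hypothesis.
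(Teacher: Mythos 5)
Your proposal is correct and matches the paper's intended argument: the paper states Corollary \ref{extremal} without proof precisely because it follows formally from Theorem \ref{averaging} and Corollary \ref{extremal char} in the way you describe, with the (2)$\leftrightarrow$(4) matching supplied by the standard fact that a continuous bijection from a compact space to a Hausdorff space is a homeomorphism. Your write-up simply makes explicit what the paper leaves implicit, and all steps check out.
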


\begin{remark} \label{KM}
By Corollary \ref{extremal}, there exists a unique extremal conditional expectation $C(Y) \to C(X)$ if and only if there exists a unique closed set $K \subseteq Y$ such that $j|_K:K \to X$ is a homeomorphism. One might expect that if there exists a unique extremal conditional expectation, then there exists a unique conditional expectation, by appealing to some sort of Krein-Milman Theorem for conditional expectations. If this were true, then one would have the desired topological characterization of when there exists a unique conditional expectation. Unfortunately, as we show in Section \ref{examples} below, one can have few extremal conditional expectations (one or zero) but infinitely many conditional expectations. See also \cite{Lloyd1963}.
\end{remark}

\begin{theorem}[\cite{RepovsSemenovScepin1997}, Thm. 1.2] \label{exact Milutin}
Let $j:Y \to X$ be a continuous open surjection of Polish spaces. Then there exists a continuous map $\mu:X \to \P(Y)$ such that $\supp(\mu_x) = j^{-1}(x)$ for all $x \in X$.
\end{theorem}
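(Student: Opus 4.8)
The plan is to translate the problem into a continuous-selection problem and then upgrade an arbitrary selection to one with full support. The starting observation is that, for a continuous map $j$, openness is equivalent to lower semicontinuity of the fiber multifunction $F:X \to 2^Y$, $F(x) = j^{-1}(x)$: indeed, for open $U \subseteq Y$ one has $\{x : F(x) \cap U \neq \emptyset\} = j(U)$, so $F$ is lower semicontinuous exactly when $j$ is open. Since $j$ is also surjective and $Y$ is Polish, $F$ has nonempty closed values. The obstruction to selecting a point of each fiber continuously is that the fibers are not convex; I would circumvent this by passing to measures. Because $\P(Y)$ (with the narrow topology, which is Polish since $Y$ is Polish) is convex, the induced multifunction $\widehat F:X \to 2^{\P(Y)}$, $\widehat F(x) = \{\mu \in \P(Y) : \supp(\mu) \subseteq F(x)\} = \P(F(x))$, has nonempty closed convex values, and I would show it is lower semicontinuous: a finitely supported $\mu_0 = \sum_i \lambda_i \delta_{y_i} \in \widehat F(x_0)$ can be perturbed, using lower semicontinuity of $F$ to move each $y_i$ inside $F(x)$ for $x$ near $x_0$, to a nearby element of $\widehat F(x)$, and such $\mu_0$ are narrowly dense in $\widehat F(x_0)$. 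A Michael-type convex-valued selection theorem, applied to $\widehat F$ in the narrow topology, then furnishes a continuous fiber-supported \emph{background} selection $\nu:X \to \P(Y)$, i.e.\ $\supp(\nu_x) \subseteq j^{-1}(x)$.

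This already yields a fiber-supported probability measure in the sense of Theorem~\ref{averaging}; the real content is arranging full support. Here I would exploit second countability of $Y$. Enumerate as $\{(P_i,Q_i)\}_{i \in \bbN}$ all pairs of basic open sets with $\overline{P_i} \subseteq Q_i$; since $Y$ is metric, the family $\{Q_i\}$ is still a basis, and moreover for each $y$ and each neighborhood $O$ one may choose a pair with $y \in P_i$ and $y \in Q_i \subseteq O$. For each $i$, the set $W_i = j(P_i)$ is open in $X$, and on $W_i$ the multifunction $x \mapsto \overline{F(x) \cap P_i}$ is lower semicontinuous with nonempty closed values (the intersection of the lower semicontinuous $F$ with the open set $P_i$, then closure). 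Applying the selection theorem to $x \mapsto \P\big(\overline{F(x) \cap P_i}\big)$ yields a continuous $\tau_i:W_i \to \P(Y)$ with $\supp(\tau_i(x)) \subseteq \overline{F(x) \cap P_i} \subseteq F(x)$. The crucial point of the inner approximation is that $\overline{F(x) \cap P_i} \subseteq \overline{P_i} \subseteq Q_i$, so $\tau_i(x)$ is supported strictly inside the open set $Q_i$ and hence $\tau_i(x)(Q_i) = 1 > 0$ for every $x \in W_i$; this is precisely what lets us charge an open set rather than merely its closure.

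It remains to assemble these local selections into a single global continuous measure. Choose continuous $\phi_i:X \to [0,1]$ with $\{\phi_i > 0\} = W_i$ (possible since open sets in a metric space are cozero sets), and set $\mu^{(i)}_x = \phi_i(x)\tau_i(x) + (1 - \phi_i(x))\nu_x$ for $x \in W_i$ and $\mu^{(i)}_x = \nu_x$ otherwise. As $\phi_i \to 0$ toward $\partial W_i$ the two formulas agree in the limit, so each $\mu^{(i)}:X \to \P(Y)$ is continuous and fiber-supported, with $\mu^{(i)}_x(Q_i) \geq \phi_i(x) > 0$ for $x \in W_i$. Finally define $\mu_x = \sum_{i} 2^{-i}\mu^{(i)}_x$, a narrowly convergent series of continuous maps into the complete metric space $\P(Y)$, hence continuous, with each $\mu_x \in \P(Y)$ supported in the closed set $j^{-1}(x)$. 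For the reverse containment, fix $x$, a point $y \in j^{-1}(x)$, and an open $O \ni y$; choosing a pair with $y \in P_i$ and $y \in Q_i \subseteq O$ gives $x \in W_i$, whence $\mu_x(O) \geq \mu_x(Q_i) \geq 2^{-i}\mu^{(i)}_x(Q_i) > 0$. Thus $y \in \supp(\mu_x)$, and $\supp(\mu_x) = j^{-1}(x)$ as required.

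I expect the main obstacle to be twofold. First, the full-support (``exact'') requirement is exactly what distinguishes this theorem from a bare selection result, and it is what forces the inner-approximation device above: a single Michael selection controls only where mass is \emph{allowed}, not where it is \emph{positive}, so mass can collapse onto fiber boundaries unless one selects measures supported compactly inside the target open sets. Second, and more technically, the selection theorem must be applied in the narrow topology on $\P(Y)$ rather than in a norm topology; verifying that a Michael-type theorem applies to lower semicontinuous convex-valued maps into $\P(Y)$ (a completely metrizable convex subset of the locally convex space of measures) is the delicate hypothesis-checking on which the whole argument rests, and is where the openness of $j$ enters in an essential way through the lower semicontinuity of $F$.
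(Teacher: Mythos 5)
First, a point of reference: the paper does not prove this theorem at all --- it is imported verbatim from Repov\v{s}--Semenov--\v{S}\v{c}epin, consistent with the author's announced policy of only proving restatements that are stronger than the original. So the comparison is with that cited literature rather than with an internal argument, and your proposal is in fact a reconstruction of the selection-theoretic method used there: lower semicontinuity of the fiber multifunction $x \mapsto j^{-1}(x)$ from openness of $j$, convexification by passing to $\P(j^{-1}(x))$, Michael-type selections, and a countable-base mixing device to force full support. The parts of your argument that are self-contained all check out: $\P(F(x))$ is nonempty, convex, and narrowly closed in $\P(Y)$ (portmanteau); lower semicontinuity of $x \mapsto \P(F(x))$ follows from the density of finitely supported measures exactly as you sketch; the inner-approximation pairs $\overline{P_i} \subseteq Q_i$ correctly yield $\tau_i(x)(Q_i) = 1$ (using that $\mu(\supp \mu) = 1$ for Borel probability measures on Polish spaces); the glued maps $\mu^{(i)}$ are continuous across $\partial W_i$ because the uniform mass bound gives $\bigl|\int f \, d(\phi_i(x)\tau_i(x))\bigr| \leq \phi_i(x)\|f\|_\infty \to 0$; and the weighted series argument delivers both inclusions for $\supp(\mu_x)$. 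The $\overline{P_i} \subseteq Q_i$ trick is precisely the right idea, and your diagnosis of why it is needed (a selection controls where mass is allowed, not where it is positive) is accurate.

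The one genuine soft spot is the step you flag but do not discharge: the invocation of ``a Michael-type convex-valued selection theorem'' for maps into $\P(Y)$ with the narrow topology. As stated, no off-the-shelf Banach-space version applies. For noncompact Polish $Y$ the narrow topology on $\P(Y)$ is not induced from any Banach space; and if you try to repair this by compactifying ($Y \hookrightarrow Q$ the Hilbert cube, so $\P(Y) \subseteq \P(Q) \subseteq C(Q)^*$, with $\P(Q)$ affinely embeddable in a Fr\'{e}chet space), the values $\P(F(x))$ cease to be closed --- indeed cease to be complete in the ambient metric --- because $F(x)$ is closed only in $Y$, not in $Q$. So Michael's classical theorem (paracompact domain, Banach range, closed convex values) and even its ``complete convex values'' refinement fail to apply verbatim through that embedding. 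The gap is fillable, but by a specific tool: Michael's selection theory for convex structures in complete metric spaces (Michael, \emph{Convex structures and continuous selections}, Canad. J. Math. 11 (1959)) applies to $\P(Y)$ equipped with the Prokhorov metric, since that metric is complete when $Y$ is Polish, the mixture operation $(\lambda,\mu,\nu) \mapsto \lambda\mu + (1-\lambda)\nu$ is continuous, Prokhorov balls are convex (the defining conditions $\mu(A) \leq \nu(A^\epsilon) + \epsilon$ are affine in $\mu$), and your values $\P(F(x))$ and $\P\bigl(\overline{F(x) \cap P_i}\bigr)$ are closed in $\P(Y)$, hence Prokhorov-complete. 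With that citation and verification inserted --- both for the background selection $\nu$ and for each local selection $\tau_i$ --- your proof is complete, and it is in the same spirit as, rather than a departure from, the cited source.
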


\begin{theorem}[\cite{AgeevTymchatyn2005}, Thm. 1.3] \label{inverseSierpinskiHausdorff}
Let $j:Y \to X$ be a continuous surjection of Polish spaces, and let $A \subseteq Y$. If $j|_A:A \to X$ is an open surjection and $(j|_A)^{-1}(x) \subseteq Y$ is closed for all $x \in X$, then $A \subseteq Y$ is $G_\delta$.
\end{theorem}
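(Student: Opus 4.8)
The plan is to recast the problem in the language of continuous measure fields supplied by Theorem \ref{averaging}, and to build a dictionary between conditional expectations and the $G_\delta$ open surjection sets appearing in the statement. Write $\mathcal{E}$ for the (convex) set of conditional expectations $C(Y)\to C(X)$, identified via Theorem \ref{averaging}(1) with the continuous maps $\mu:X\to\P(Y)$ satisfying $\supp(\mu_x)\subseteq j^{-1}(x)$, and write $\mathcal{G}$ for the collection of $G_\delta$ sets $A\subseteq Y$ with $j|_A:A\to X$ an open surjection. Both halves of the argument rest on two passages between these data.

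The first passage sends a field to a set: given $\mu\in\mathcal{E}$ I would set $A_\mu=\bigcup_{x\in X}\supp(\mu_x)$. Since the supports lie in pairwise disjoint fibres, $A_\mu\cap j^{-1}(x)=\supp(\mu_x)$, which is nonempty (as $\mu_x$ is a probability measure) and closed; hence $j|_{A_\mu}$ is a surjection with closed fibres. Openness of $j|_{A_\mu}$ I would obtain from the sequential (lower semicontinuity) criterion: if $x_n\to x$ and $y\in\supp(\mu_x)$, then for every neighbourhood $U$ of $y$ the portmanteau inequality $\liminf_n\mu_{x_n}(U)\ge\mu_x(U)>0$ forces $\supp(\mu_{x_n})$ to meet $U$ eventually, and a diagonal choice produces $y_n\in\supp(\mu_{x_n})$ with $y_n\to y$. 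With closed fibres in hand, Theorem \ref{inverseSierpinskiHausdorff} gives $A_\mu\in\mathcal{G}$. The second passage sends a set to a field: given $A\in\mathcal{G}$, the set $A$ is Polish, so Theorem \ref{exact Milutin} furnishes a continuous $\nu:X\to\P(A)$ with $\supp_A(\nu_x)=A\cap j^{-1}(x)$; pushing forward along the inclusion $\iota:A\to Y$ yields $\tilde\nu=\iota_*\nu$, which is continuous and satisfies $\supp(\tilde\nu_x)\subseteq\overline{A\cap j^{-1}(x)}\subseteq j^{-1}(x)$, hence defines a conditional expectation.

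For the forward implication, suppose $\mathcal{E}=\{E_0\}$. Being the unique point of a convex set, $E_0$ is extremal, so by Corollary \ref{extremal char} it is a $*$-homomorphism $g\mapsto g\circ\alpha$ for a continuous section $\alpha$ of $j$, with field $\mu^0_x=\delta_{\alpha(x)}$; thus $K:=\alpha(X)$ is closed and $j|_K$ is a homeomorphism, so $K\in\mathcal{G}$. To see $K$ is the only member of $\mathcal{G}$, I would take an arbitrary $A\in\mathcal{G}$ and let $\tilde\nu$ be the conditional expectation it produces via the second passage. Uniqueness forces $\tilde\nu=\mu^0$, i.e. $\iota_*\nu_x=\delta_{\alpha(x)}$; since $\iota$ is injective this gives $\nu_x=\delta_{\alpha(x)}$, whence $A\cap j^{-1}(x)=\supp_A(\nu_x)=\{\alpha(x)\}$ for every $x$ and therefore $A=K$. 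This yields uniqueness of the $G_\delta$ open surjection set together with the final assertions that it is closed and carried homeomorphically onto $X$.

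The reverse implication is where I expect the real work, and it hinges on a single-point-removal lemma: if $A\in\mathcal{G}$ and some fibre $A\cap j^{-1}(x_0)$ contains two distinct points $y_1,y_2$, then $A':=A\setminus\{y_2\}$ again lies in $\mathcal{G}$. Indeed $A'=A\cap(Y\setminus\{y_2\})$ is $G_\delta$; it still surjects (the fibre over $x_0$ retains $y_1$, the others are unchanged); and it remains open, because for $y\in A'$ and $x_n\to j(y)$ any lift $y_n\in A\cap j^{-1}(x_n)$ with $y_n\to y$ coming from openness of $j|_A$ must satisfy $y_n\ne y_2$ for all large $n$ (as $y_n\to y\ne y_2$), so after replacing the finitely many terms equal to $y_2$ by $y_1$ it becomes a lift of $x_n\to j(y)$ inside $A'$. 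Granting this, suppose $\mathcal{G}=\{A_0\}$. The lemma forbids $A_0$ from having any multi-point fibre, so $j|_{A_0}$ is a continuous open bijection, hence a homeomorphism, making $A_0$ compact, hence closed, with continuous inverse section $\alpha$. Finally, any conditional expectation $\mu$ has $A_\mu\in\mathcal{G}=\{A_0\}$ by the first passage, so $\supp(\mu_x)=A_0\cap j^{-1}(x)=\{\alpha(x)\}$ and $\mu_x=\delta_{\alpha(x)}$; thus $g\mapsto g\circ\alpha$ is the unique conditional expectation. The main obstacle is precisely verifying that the removal preserves openness; the sequential criterion makes this clean, but one must confirm that no approximating sequence is forced through the deleted point, which is exactly what $y\ne y_2$ guarantees.
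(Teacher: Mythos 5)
Your proposal does not prove the statement at hand. Theorem \ref{inverseSierpinskiHausdorff} is a purely descriptive-topological fact, imported by the paper from \cite{AgeevTymchatyn2005} without proof (the paper only reproves cited results when its restatement is stronger than the original): if $j|_A:A \to X$ is an open surjection whose fibres $(j|_A)^{-1}(x)$ are closed \emph{in $Y$}, then $A$ is $G_\delta$ in $Y$. What you have written is instead a proof sketch of Theorem \ref{main} together with the content of Theorem \ref{Milutin} --- and, fatally for the present purpose, it invokes Theorem \ref{inverseSierpinskiHausdorff} itself as an ingredient (``With closed fibres in hand, Theorem \ref{inverseSierpinskiHausdorff} gives $A_\mu\in\mathcal{G}$''). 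As a proof of the statement this is circular: nowhere do you derive the $G_\delta$ conclusion from the hypotheses; you assume it as a black box. The measure-theoretic machinery you deploy (fields $\mu:X\to\P(Y)$, the portmanteau inequality, the single-point-removal lemma) is simply not aimed at the target, which makes no reference to measures at all.

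A genuine proof has to manufacture a countable intersection of open sets from the two hypotheses, and both are used in an essential way. One standard route: openness of $j|_A$ says exactly that the closed-valued multifunction $x \mapsto F_x := A \cap j^{-1}(x)$ is lower semicontinuous (for open $V \subseteq Y$, $\{x : F_x \cap V \neq \emptyset\} = j(A \cap V)$ is open). Fixing a compatible metric $d$ on $Y$, lower semicontinuity of $x \mapsto F_x$ plus continuity of $j$ make $y \mapsto d(y, F_{j(y)})$ upper semicontinuous, so each set $U_n = \{y \in Y : d(y, F_{j(y)}) < 1/n\}$ is open; and $A = \bigcap_n U_n$, where the inclusion $\bigcap_n U_n \subseteq A$ is precisely where closedness of the fibres in $Y$ enters (a point at distance zero from the closed set $F_{j(y)}$ lies in it, hence in $A$), while surjectivity guarantees $F_{j(y)} \neq \emptyset$ so the distance is finite. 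Nothing of this kind appears in your proposal. For what it is worth, the material you did write tracks the paper's proofs of Theorem \ref{Milutin}(2) and Theorem \ref{main} quite closely --- your liminf argument for openness of $j|_{A_\mu}$ is the paper's, and your removal lemma is the paper's injectivity step, where openness of $j|_{A\setminus\{y_2\}}$ is in fact immediate (it is relatively open in $A$, so its relatively open subsets are relatively open in $A$) and your sequential lifting argument is unnecessary --- but that is a proof of a different theorem, not of this one.
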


\begin{theorem}[\cite{AgeevTymchatyn2005}, Thm. 1.2] \label{Milutin}
Let $j:Y \to X$ be a continuous surjection of Polish spaces.
\begin{enumerate}
\item Suppose $A \subseteq Y$ is a $G_\delta$ set such that $j|_A:A \to X$ is an open surjection. Then there exists a continuous map $\mu:X \to \P(Y)$ such that $\supp(\mu_x) \subseteq j^{-1}(x)$ and $\supp(\mu_x) \cap A = (j|_A)^{-1}(x)$, $x \in X$.
\item Conversely, suppose $\mu:X \to \P(Y)$ is a continuous map such that $\supp(\mu_x) \subseteq j^{-1}(x)$. Define $A = \bigcup_{x \in X} \supp(\mu_x)$. Then $A \subseteq Y$ is a $G_\delta$ set such that $j|_A:A \to X$ is an open surjection.
\end{enumerate}
\end{theorem}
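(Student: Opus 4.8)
The plan is to run the entire argument through the two dictionaries already assembled in Section 2. By Theorem \ref{averaging}(1) I identify a conditional expectation $E:C(Y)\to C(X)$ with its continuous measure selection $\mu:X\to\P(Y)$, $\supp(\mu_x)\subseteq j^{-1}(x)$. By Theorem \ref{Milutin}(2), each such $\mu$ yields a member $A_\mu:=\bigcup_{x\in X}\supp(\mu_x)$ of the collection $\mathcal{G}$ of all $G_\delta$ sets $A\subseteq Y$ for which $j|_A:A\to X$ is an open surjection; conversely, Theorem \ref{Milutin}(1) produces, from any $A\in\mathcal{G}$, a conditional expectation $\mu$ with $\supp(\mu_x)\cap A=A\cap j^{-1}(x)$ for all $x$. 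The proof is then just a transfer of uniqueness across this correspondence.

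For the forward implication, suppose $\mu^0$ is the unique conditional expectation. A one-point convex set has its element as an extreme point, so $\mu^0$ is extremal, and Corollary \ref{extremal char} furnishes a continuous section $\alpha:X\to Y$ of $j$ with $\mu^0_x=\delta_{\alpha(x)}$. Hence $A_0:=A_{\mu^0}=\alpha(X)$ is closed and $j|_{A_0}$ is a homeomorphism. To see $A_0$ is the \emph{only} member of $\mathcal{G}$, take any $A\in\mathcal{G}$ and apply Theorem \ref{Milutin}(1); the conditional expectation it returns must equal $\mu^0$, so $\{\alpha(x)\}\cap A=\supp(\mu^0_x)\cap A=A\cap j^{-1}(x)$ for every $x$. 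Surjectivity of $j|_A$ makes the right side nonempty, forcing $A\cap j^{-1}(x)=\{\alpha(x)\}$, and therefore $A=\alpha(X)=A_0$.

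For the reverse implication, assume $\mathcal{G}=\{A_0\}$. The crux is to show $j|_{A_0}$ is already a homeomorphism; granting this, write $A_0=\alpha(X)$ for the inverse section $\alpha$. Any conditional expectation $\mu$ then has $A_\mu\in\mathcal{G}$ by Theorem \ref{Milutin}(2), so $A_\mu=A_0$ and $\supp(\mu_x)\subseteq A_0\cap j^{-1}(x)=\{\alpha(x)\}$, whence $\mu_x=\delta_{\alpha(x)}$ and the conditional expectation is unique (existence coming from Theorem \ref{Milutin}(1) applied to $A_0$).

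Thus everything reduces to one claim, which I expect to be the sole delicate point: if $A_0\in\mathcal{G}$ and $j|_{A_0}$ is not injective, then $\mathcal{G}$ has a second element. I would prove this by an elementary surgery rather than the heavy selection theorems. Pick $y^*\in A_0$ in a fibre $A_0\cap j^{-1}(x^*)$ with at least two points and set $A':=A_0\setminus\{y^*\}$. Since $\{y^*\}$ is closed, $A'$ is again $G_\delta$, and $j|_{A'}$ is still onto because the fibre over $x^*$ retains a point. The key is that $j|_{A'}$ stays \emph{open}: for $U$ open in $A_0$ one has $j(U\setminus\{y^*\})=j(U)$ or $j(U)\setminus\{x^*\}=j(U)\cap(X\setminus\{x^*\})$, according as $U$ contains a point of $j^{-1}(x^*)$ other than $y^*$ or not, and both sets are open because points of $X$ are closed and $j|_{A_0}$ is open. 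Hence $A'\in\mathcal{G}$ with $A'\neq A_0$, contradicting uniqueness. Therefore $j|_{A_0}$ is a continuous open bijection, i.e.\ a homeomorphism, and $A_0\cong X$ is compact, so closed in $Y$, completing the plan.
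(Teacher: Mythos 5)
There is a fundamental mismatch: the statement you were asked to prove is Theorem \ref{Milutin} (the Ageev--Tymchatyn result relating $G_\delta$ sets with open surjective restrictions to continuous measure selections $\mu:X \to \P(Y)$), but your proposal invokes both parts of Theorem \ref{Milutin} as black boxes (``By Theorem \ref{Milutin}(2), each such $\mu$ yields a member \dots''; ``Theorem \ref{Milutin}(1) produces, from any $A \in \mathcal{G}$ \dots''). What you have actually written is a proof of Theorem \ref{main}, the paper's main characterization of unique conditional expectations. As a proof of the statement in question it is therefore circular---it assumes exactly what is to be shown---and none of the genuine content of Theorem \ref{Milutin} is addressed anywhere in your argument.

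Concretely, the missing work is as follows. For part (1), the paper notes that a $G_\delta$ subset $A$ of a Polish space is itself Polish, applies the exact Milutin theorem of Repov\v{s}--Semenov--\v{S}\v{c}epin (Theorem \ref{exact Milutin}) to the open surjection $j|_A:A \to X$ to obtain $\nu:X \to \P(A)$ with $\supp(\nu_x) = (j|_A)^{-1}(x)$, and then pushes these measures to $Y$ via $\mu_x(B) = \nu_x(B \cap A)$, checking that $\supp(\mu_x) = \o{\supp(\nu_x)} \subseteq j^{-1}(x)$ while $\supp(\mu_x) \cap A = \supp(\nu_x)$. For part (2), one must verify that $j|_A$ is surjective with fibers $(j|_A)^{-1}(x) = \supp(\mu_x)$ closed in $Y$, prove openness of $j|_A$ using lower semicontinuity of $\mu \mapsto \mu(V)$ for open $V$ under weak* convergence (the portmanteau theorem), and only then conclude that $A$ is $G_\delta$ by appealing to the separate Ageev--Tymchatyn criterion, Theorem \ref{inverseSierpinskiHausdorff}; this last step is essential, since there is no elementary reason a union of supports should be $G_\delta$. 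Incidentally, read as a proof of Theorem \ref{main} your argument is essentially the paper's own (including the point-deletion surgery showing $j|_{A_0}$ is injective, where your explicit openness check for $j|_{A'}$ is a nice touch the paper leaves implicit), but note that in the Polish-space setting of Theorem \ref{Milutin} your closing remark that $A_0 \cong X$ is compact would also fail---compactness of $X$ is only available in Theorem \ref{main}.
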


\begin{proof}
(1) Since $A \subseteq Y$ is $G_\delta$, it is Polish \cite[Thm. 2.2.1]{Srivastava1998}. By Theorem \ref{exact Milutin}, there exists a continuous map $\nu:X \to \P(A)$ such that $\supp(\nu_x) = (j|_A)^{-1}(x)$, $x \in X$. For $x \in X$ and $B \subseteq Y$ Borel, define $\mu_x(B) = \nu_x(B \cap A)$. Then $\mu_x \in \P(Y)$, and $\mu:X \to \P(Y)$ is continuous. Furthermore, for all $x \in X$,
\[
    \supp(\mu_x) = \o{\supp(\nu_x)} \subseteq j^{-1}(x).
\]
Finally, for all $x \in X$,
\[
    \supp(\mu_x) \cap A = \o{\supp(\nu_x)} \cap A = \supp(\nu_x) = (j|_A)^{-1}(x).
\]

(2) It is easy to see that $j|_A:A \to X$ is a surjection. Furthermore, for all $x \in X$,
\[
    (j|_A)^{-1}(x) = j^{-1}(x) \cap A = \supp(\mu_x) \subseteq Y
\]
is closed. Now let $U \subseteq A$ be open. There exists $V \subseteq Y$ open such that $U = V \cap A$. Suppose $x_i \in j(U)^c$ and $x_i \to x \in X$. Since $j^{-1}(x_i) \cap U = \emptyset$, we have that
\[
    \supp(\mu_{x_i}) \cap V = j^{-1}(x_i) \cap A \cap V = j^{-1}(x_i) \cap U = \emptyset,
\]
which implies $\mu_{x_i}(V) = 0$. Since $\mu_{x_i} \to \mu_x$ weak*, we have that
\[
    \mu_x(V) \leq \liminf_i \mu_{x_i}(V) = 0.
\]
(See \cite[Thm. 6.1]{Parthasarathy1967}.) Thus $\supp(\mu_x) \cap V = \emptyset$, which implies
\[
    j^{-1}(x) \cap U = j^{-1}(x) \cap A \cap V = \supp(\mu_x) \cap V = \emptyset.
\]
Hence $x \in j(U)^c$, which implies $j(U)^c$ is closed, which in turn implies $j(U)$ is open. It follows that $j|_A:A \to X$ is open. By Theorem \ref{inverseSierpinskiHausdorff}, $A \subseteq Y$ is $G_\delta$.
\end{proof}

\section{Proof of the Main Result}

\begin{proof}[Proof of Theorem \ref{main}.]
($\Rightarrow$) Suppose there exists a unique conditional expectation $E:C(Y) \to C(X)$. Then (trivially) $E$ is an extremal conditional expectation. By Theorem \ref{averaging}, there exists a unique continuous map $\mu:X \to \P(Y)$ such that $\supp(\mu_x) \subseteq j^{-1}(x)$, $x \in X$. By Corollary \ref{extremal char}, there exists a continuous map $\alpha:X \to Y$ such that $j \circ \alpha = \id$ and $\mu_x = \delta_{\alpha(x)}$, $x \in X$. Clearly $\alpha(X) \subseteq Y$ is closed and $j|_{\alpha(X)}:\alpha(X) \to X$ is a homeomorphism. Now let $A \subseteq Y$ be a $G_\delta$ set such that $j|_A:A \to X$ is an open surjection. By Theorem \ref{Milutin}, part (1), there exists a continuous map $\nu:X \to \P(Y)$ such that $\supp(\nu_x) \subseteq j^{-1}(x)$ and $\supp(\nu_x) \cap A = (j|_A)^{-1}(x)$, $x \in X$. By uniqueness, $\nu = \mu$. Thus for all $x \in X$,
\[
    \emptyset \neq (j|_A)^{-1}(x) = j^{-1}(x) \cap A = \supp(\nu_x) \cap A = \supp(\mu_x) \cap A \subseteq \{\alpha(x)\}.
\]
It follows that $j^{-1}(x) \cap A = \{\alpha(x)\}$ for all $x \in X$, so that $A = \alpha(X)$.

($\Leftarrow$) Conversely, suppose there exists a unique $G_\delta$ set $A \subseteq Y$ such that $j|_A:A \to X$ is an open surjection.  We claim that $j|_A:A \to X$ is injective. Indeed, if $y_1, y_2 \in A$, with $y_1 \neq y_2$ and $j(y_1) = j(y_2)$, then $B = A \backslash \{y_2\} \subseteq Y$ is $G_\delta$ and $j|_B:B \to X$ is an open surjection, contradicting the uniqueness of $A$. It follows from the claim that $j|_A:A \to X$ is a homeomorphism. Set $\alpha = (j|_A)^{-1}:X \to A$. Then $\mu:X \to \P(Y)$ defined by $\mu_x = \delta_{\alpha(x)}$, $x \in X$, is a continuous map such that $\supp(\mu_x) \subseteq j^{-1}(x)$, $x \in X$. Now let $\nu:X \to \P(Y)$ be a continuous map such that $\supp(\nu_x) \subseteq j^{-1}(x)$, $x \in X$. Define $C = \bigcup_{x \in X} \supp(\nu_x)$. By Theorem \ref{Milutin}, part (2), $C \subseteq Y$ is a $G_\delta$ set such that $j|_C:C \to X$ is an open surjection. By uniqueness, $C = A$. Thus for all $x \in X$,
\[
    \supp(\nu_x) = j^{-1}(x) \cap C = j^{-1}(x) \cap A = (j|_A)^{-1}(x) = \{\alpha(x)\}.
\]
It follows that $\nu_x = \delta_{\alpha(x)} = \mu_x$, $x \in X$, so that $\nu = \mu$. By Theorem \ref{averaging}, there exists a unique conditional expectation $E:C(Y) \to C(X)$.
\end{proof}

\section{Examples} \label{examples}

In this section we use Theorems \ref{main} and \ref{averaging} and Corollary \ref{extremal} to analyze the conditional expectations for various interesting abelian $C^*$-inclusions. The first two examples illustrate Remark \ref{KM}, that in general there is no Krein-Milman Theorem for conditional expectations. This was initially observed in \cite{Lloyd1963}. The last two examples combine to show that there exists an abelian $C^*$-inclusion which admits a unique conditional expectation but infinitely many pseudo-expectations. To our knowledge, this is the first such example, and it answers Question 3 from Section 7.1 of \cite{PittsZarikian2016} affirmatively.

\begin{example} \label{canonical}
Let
\[
    Y = ([0,2] \times \{0\}) \cup ([0,1] \times \{1\}) \subseteq \bbR^2,
\]
\[
    X = [0,2] \subseteq \bbR,
\]
and
\[
    j:Y \to X:(x,y) \mapsto x.
\]
Then the corresponding abelian $C^*$-inclusion $(C(Y),C(X))$ has infinitely many conditional expectations, only one of which is extremal.
\end{example}

\begin{proof}
It is easy to see that there exists a unique closed set $K \subseteq Y$ such that $j|_K:K \to X$ is a homeomorphism, namely $K = [0,2] \times \{0\}$. But $K$ is not the unique $G_\delta$ set $A \subseteq Y$ such that $j|_A:A \to X$ is an open surjection, since $L = Y \backslash \{(1,1)\}$ is another such set. The result now follows by Theorem \ref{main} and Corollary \ref{extremal}.

Alternatively, $\mu:X \to \P(Y)$ defined by $\mu_x = \delta_{(x,0)}$ is the unique continuous function such that $\supp(\mu_x)$ is a singleton contained in $j^{-1}(x)$ for all $x \in X$. But $\nu:X \to \P(Y)$ defined by
\[
    \nu_x = \begin{cases}
        x\delta_{(x,0)} + (1-x)\delta_{(x,1)}, & 0 \leq x \leq 1\\
        \delta_{(x,0)}, & 1 \leq x \leq 2
    \end{cases}
\]
is another continuous map such that $\supp(\nu_x) \subseteq j^{-1}(x)$, $x \in X$. So the result also follows by Theorem \ref{averaging} and Corollary \ref{extremal}.
\end{proof}

\begin{example} \label{space-filling}
Let $I = [0,1]$ be the unit interval and $I^2 = I \times I$ be the unit square. By \cite[Thm. 2]{Taskinen1993}, there exists a continuous surjection $j:I \to I^2$ (i.e., a space-filling curve) such that the corresponding abelian $C^*$-inclusion $(C(I),C(I^2))$ admits a conditional expectation. Then $(C(I),C(I^2))$ admits infinitely many conditional expectations, none of which are extremal.
\end{example}

\begin{proof}
Suppose there exists an extremal conditional expectation $E:C(I) \to C(I^2)$. Then by Corollary \ref{extremal} there exists a continuous map $\alpha:I^2 \to I$ such that $j \circ \alpha = \id$. In particular, $\alpha$ is injective. Let $z_1, z_2, z_3 \in I^2$ be such that $\alpha(z_1) < \alpha(z_2) < \alpha(z_3)$. Then $I^2 \backslash \{z_2\}$ is connected but $\alpha(I^2 \backslash \{z_2\})$ is not, a contradiction.
\end{proof}

\begin{example} \label{Cantor}
Let $\C = \prod_{n=1}^\infty \{0,1\}$ be the Cantor set, $I = [0,1]$ be the unit interval, and $j:\C \to I$ be the continuous surjection defined by
\[
    j((a_n)) = \sum_{n=1}^\infty \frac{a_n}{2^n}.
\]
Then the corresponding abelian $C^*$-inclusion $(C(\C),C(I))$ admits no conditional expectations. In fact, the only completely positive $C(I)$-bimodule map $\theta:C(\C) \to C(I)$ is $\theta = 0$.
\end{example}

\begin{proof}
We denote by $D \subseteq I$ the dyadic rationals (excluding $0$ and $1$). Let $x \in I$. If $x \in I \backslash D$, let $\alpha_0(x)$ and $\alpha_1(x)$ both equal to the unique pre-image of $x$ under $j$. Otherwise, if $x \in D$, let $\alpha_0(x)$ be the pre-image of $x$ under $j$ which ends with an infinite string of zeros, and let $\alpha_1(x)$ be the pre-image of $x$ under $j$ which ends with an infinite string of ones. Now let $\mu:I \to \M(\C)_+$ be a continuous map such that $\supp(\mu_x) \subseteq j^{-1}(x)$, $x \in I$. Then there exist functions $\lambda_0, \lambda_1:I \to [0,\infty)$ such that
\[
    \mu_x = \lambda_0(x)\delta_{\alpha_0(x)} + \lambda_1(x)\delta_{\alpha_1(x)}, ~ x \in X.
\]
Suppose $x \in D$. Let $\{x_n\} \subseteq I \backslash D$ be such that $\alpha_0(x_n) \to \alpha_0(x)$. Then $x_n = j(\alpha_0(x_n)) \to j(\alpha_0(x)) = x$, and so $\mu_{x_n} \to \mu_x$ weak*. Now let $U, V \subseteq \C$ be disjoint open sets such that $\alpha_0(x) \in U$ and $\alpha_1(x) \in V$. Then
\[
    \lambda_1(x) = \mu_x(V) \leq \liminf_n \mu_{x_n}(V) = 0.
\]
Likewise $\lambda_0(x) = 0$. Thus $\mu_x = 0$. Since $D$ is dense in $I$, $\mu = 0$. The result now follows by Theorem \ref{averaging}.
\end{proof}

\begin{example} \label{Q3}
Let $(\A,\D)$ be an abelian $C^*$-inclusion such that the only completely positive $\D$-bimodule map $\theta:\A \to \D$ is $\theta = 0$. (For example, the abelian $C^*$-inclusion in Example \ref{Cantor} above.) Let $\tilde{\D} = \{d \oplus d: d \in \D\}$. Then the abelian $C^*$-inclusion $(\A \oplus \D,\tilde{\D})$ admits a unique conditional expectation but infinitely many pseudo-expectations.
\end{example}

\begin{proof}
The map $E:\A \oplus \D \to \tilde{\D}: x \oplus d \mapsto d \oplus d$ is clearly a conditional expectation. We claim that it is the only one. Indeed, let $\Theta:\A \oplus \D \to \tilde{\D}$ be a conditional expectation. For $x \in \A$, define $\theta(x) = \pi(\Theta(x \oplus 0))$, where $\pi:\tilde{\D} \to \D$ is given by $\pi(d \oplus d) = d$, $d \in \D$. It is easy to see that $\theta:\A \to \D$ is a completely positive $\D$-bimodule map, so that $\theta = 0$ by hypothesis. Then $\Theta(x \oplus 0) = 0 \oplus 0$, $x \in \A$, so that $\Theta(x \oplus d) = d \oplus d$, $x \oplus d \in \A \oplus \D$.

To show that $(\A \oplus \D,\tilde{\D})$ admits infinitely many pseudo-expectations, it suffices to construct a pseudo-expectation $\tilde{\theta}:\A \oplus \D \to I(\tilde{\D})$ distinct from the conditional expectation $E$ above, because any convex combination of pseudo-expectations is again a pseudo-expectation. To that end, let $\theta:\A \to I(\D)$ be a pseudo-expectation and define $\tilde{\theta}:\A \oplus \D \to \widetilde{I(\D)}: x \oplus d \mapsto \theta(x) \oplus \theta(x)$. (Here $\widetilde{I(\D)} = \{v \oplus v: v \in I(\D)\} = I(\tilde{\D})$.) It is easy to see that $\tilde{\theta}$ is a pseudo-expectation, and $\tilde{\theta}(1 \oplus 0) = 1 \oplus 1 \neq 0 \oplus 0 = E(1 \oplus 0)$.
\end{proof}

\end{document}